\documentclass[12pt]{article}
\usepackage[centertags]{amsmath}
\usepackage{amsfonts}
\usepackage{amssymb}
\usepackage{latexsym}
\usepackage{amsthm}
\usepackage{newlfont}
\usepackage{graphicx}
\usepackage{listings}
\usepackage{booktabs}
\usepackage{abstract}
\usepackage{enumerate}
\RequirePackage{srcltx}
\lstset{numbers=none,language=MATLAB}
\setcounter{page}{1}
\date{}

\bibliographystyle{amsplain}

\newlength{\defbaselineskip}
\setlength{\defbaselineskip}{\baselineskip}
\newcommand{\setlinespacing}[1]%
           {\setlength{\baselineskip}{#1 \defbaselineskip}}

\newcommand{\N}{{\mathbb{N}}}

\newcommand{\actaqed}{\hfill $\actabox$}
{\medskip\noindent \textit{Proof of #1. }}%
{\actaqed \medskip}

\def\D{{\mathcal D}}
\def\cD{{\mathcal D}}

\def\R{{\mathbb R}}

\def \<{\langle}
\def\>{\rangle}

\def \e{\varepsilon}

\def\ga{\gamma}

\def \sp{\operatorname{span}}

\newtheorem{Theorem}{Theorem}[section]
\newtheorem{Lemma}{Lemma}[section]

\newtheorem{Proposition}{Proposition}[section]
\newtheorem{Remark}{Remark}[section]

\newtheorem{Corollary}{Corollary}[section]
\numberwithin{equation}{section}

\newcommand{\be}{\begin{equation}}
\newcommand{\ee}{\end{equation}}


\begin{document}

\title{A remark on entropy numbers}
\author{V. Temlyakov}

\newcommand{\Addresses}{{
  \bigskip
  \footnotesize

  V.N. Temlyakov, \textsc{University of South Carolina,\\ Steklov Institute of Mathematics,\\ Lomonosov Moscow State University, \\ and Moscow Center for Fundamental and Applied Mathematics
  \\
E-mail:} \texttt{temlyak@math.sc.edu}

}}

\maketitle
\begin{abstract}
{   Talagrand's fundamental result on the entropy numbers is slightly improved. Our proof uses different ideas based on results from greedy approximation.
  }
\end{abstract}

\section{Introduction}
\label{I}

The paper is motivated by recent successful applications of entropy numbers in the sampling discretization of integral norms of functions from finite dimensional subspaces. To discretize  the integral norms successfully, a  new technique was introduced.  This technique takes different forms in different papers but the common feature of its forms is the following. The new sampling discretization technique is a combination of probabilistic technique, in particular chaining technique, with results on the entropy numbers in the uniform norm (or its variants). Fundamental results from \cite{BLM}, \cite{Tal}, \cite{MSS} were used. The reader can find 
results on chaining in \cite{KoTe}, \cite{VTbook} and on generic chaining in \cite{Tal}.  We note that the idea of chaining goes back to the 1930s, when it was suggested by A.N. Kolmogorov. Later, these types of results have been developed in the study of the central limit theorem in probability theory (see, for instance, \cite{GZ}).
Also, the reader can find general results on metric entropy in \cite[Ch.15]{LGM},  \cite[Ch.3]{VTbook}, \cite[Ch.7]{VTbookMA}, \cite{Carl},
\cite{Schu} and in the recent papers \cite{VT156} and  \cite{HPV}.
Bounds for the entropy numbers of function classes are both important by themselves and by their important connections to other fundamental problems (see, for instance, \cite[Ch.3]{VTbook} and \cite[Ch.6]{DTU}).

Let $\Omega$ be a compact subset of $\R^d$ with the probability measure $\mu$. By $L_p$, $1\le p< \infty$, norm we understand
$$
\|f\|_p:=\|f\|_{L_p(\Omega,\mu)} := \left(\int_\Omega |f|^pd\mu\right)^{1/p}.
$$
By discretization of the $L_p$ norm we understand replacement of the measure $\mu$ by
a discrete measure $\mu_m$ with support on a set $\xi =\{\xi^j\}_{j=1}^m \subset \Omega$. This means that integration with respect to measure $\mu$ is replaced by an appropriate cubature formula. By $L_\infty(\Omega)$ we denote the space of continuous on $\Omega$ functions with 
the norm 
$$
\|f\|_\infty := \|f\|_{L_\infty(\Omega)} := \max_{x\in\Omega} |f(x)|.
$$

Let $X$ be a Banach space (or a linear space with the semi-norm $\|\cdot\|_X)$. For a set $W \subset X$ and a positive number $\e$ we define the covering number $N_\e(W,X)$
 as follows
$$
N_\e(W,X) :=\min \{n : \exists y^1,\dots,y^n :W\subseteq \cup_{j=1}^n B_X(y^j,\e)\},
$$
where $B_X(y,\e):=\{x\in X\,:\, \|x-y\|_X \le \e\}.$
It is convenient to consider along with the entropy $H_\e(W,X):= \log N_\e(W,X)$ (here and later $\log:=\log_2$) the entropy numbers $\e_k(W,X)$:
$$
\e_k(W,X) := \e_k(W,\|\cdot\|_X):=  \inf \{\e : \exists y^1,\dots ,y^{2^k} \in X : W \subseteq \cup_{j=1}
^{2^k} B_X(y^j,\e)\}.
$$
Denote  
$$
X^p_N := \{f:\, f\in X_N,\, \|f\|_p\le 1\}.
$$
It was understood in recent papers (see \cite{VT158}, \cite{VT159}, \cite{DPTT}, \cite{DPSTT1}, and \cite{DPSTT2}) that conditions on the entropy numbers $\e_k(X^p_N,L_\infty(\Omega))$ of the unit $L_p$-ball of the subspace $X_N$ in the uniform norm guarantee good results on the sampling discretization of the $\|f\|_p$ norms of $f\in X_N$. We note that behavior of the entropy numbers in the uniform norm is also important in some other problems. For instance,
it is known (see for instance \cite{VTbook}, section 3.6) that the problem of finding the right behavior (in the sense of order) of the entropy numbers of the unit balls of spaces of multivariate functions with mixed smoothness is equivalent to the Small Ball Problem from probability theory. We point out that the Small Ball Problem is not solved in dimensions $d\ge 3$. 

Further, in a recent paper \cite{Kos} it is was understood that in addition to the uniform norm the following weaker norm (semi-norm), which is popular in empirical processes, is useful in sampling discretization. Let a set $\Omega_n =\{x^j\}_{j=1}^n$ be a set of points from $\Omega$. Consider $L_\infty(\Omega_n)$ on $X_N$. The following result is from \cite{Kos} 

\begin{Lemma}\label{IL1} Let $p\in (2,\infty)$. Assume that for any $f\in X_N$ we have
\be\label{I1}
\|f\|_\infty \le M\|f\|_p
\ee
with some constant $M$. Then for $k\in [1,N]$ we have for any $\Omega_n$
\be\label{I2}
\e_k(X_N^p, L_\infty(\Omega_n)) \le C(p)M\left(\frac{\log n}{k}\right)^{1/p}  .
\ee
\end{Lemma}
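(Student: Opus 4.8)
The plan is to realize the discrete seminorm as a maximum of $n$ point evaluations, $\|f\|_{L_\infty(\Omega_n)}=\max_{1\le j\le n}|f(x^j)|$, and then to build an economical net for $X_N^p$ that is adapted to exactly these $n$ functionals. The trivial starting point is Nikolskii's inequality \eqref{I1}: applied to differences it gives $\|g\|_{L_\infty(\Omega_n)}\le\|g\|_\infty\le M\|g\|_p$ for every $g\in X_N$, so a single ball of radius $M$ already covers $X_N^p$ and $\e_0\le M$. The entire content of the lemma is to improve this crude bound to the factor $(\log n/k)^{1/p}$ by exploiting that only $n$ functionals are present, which is where the $\log n$ must come from.

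First I would extract a second consequence of \eqref{I1}. Interpolating $\|g\|_p\le\|g\|_\infty^{1-2/p}\|g\|_2^{2/p}$ (valid since $\mu$ is a probability measure and $p>2$) and inserting this into $\|g\|_\infty\le M\|g\|_p$ yields the $L_2$ Nikolskii inequality $\|g\|_\infty\le M^{p/2}\|g\|_2$ for all $g\in X_N$. In particular each evaluation functional $g\mapsto g(x^j)$ has norm at most $M^{p/2}$ on $(X_N,\|\cdot\|_2)$, so the expected value of a standard Gaussian vector of $X_N$ in the seminorm $\|\cdot\|_{L_\infty(\Omega_n)}$ is $\lesssim M^{p/2}\sqrt{\log n}$ (a maximum of $n$ Gaussians). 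The dual Sudakov inequality then gives $\e_k(B_2,L_\infty(\Omega_n))\lesssim M^{p/2}(\log n/k)^{1/2}$, where $B_2=\{g\in X_N:\|g\|_2\le1\}$. Since $p>2$ forces $X_N^p\subset B_2$, this already proves a bound of the right shape, but with the wrong exponents $M^{p/2}$ and $1/2$ in place of $M$ and $1/p$.

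The core of the argument is to sharpen these exponents by a multiscale (chaining) decomposition carried out \emph{inside} the subspace $X_N$. I would fix nets of $X_N^p$ in the $L_p$ metric at dyadic scales $\delta_t=2^{-t}$ and chain $f=\sum_t(\pi_{t+1}f-\pi_t f)$, so that every increment again lies in $X_N$ and Nikolskii applies to it verbatim. Each increment is then estimated in $L_\infty(\Omega_n)$ by the better of the two available tools: the direct bound $M\|\cdot\|_p$ at the coarse scales, and the $L_2$ dual Sudakov bound (which is where the $\sqrt{\log n}$ from the $n$ points enters) at the fine scales. Summing the geometric contributions and balancing the scale at which the two regimes meet is what converts $M^{p/2}(\log n/k)^{1/2}$ into the claimed $M(\log n/k)^{1/p}$.

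The hard part will be precisely this balancing, and its difficulty is structural. The constraint $\|f\|_p\le1$ is taken against $\mu$, while the target seminorm lives on the fixed points $\Omega_n$, so one cannot simply truncate $f$ according to the sizes of its values $f(x^j)$: such a truncation would leave $X_N$ and destroy the one tool, \eqref{I1}, that is available. Keeping every chaining increment inside $X_N$ while still extracting the full strength of the $L_p$ normalization — rather than the lossy domination $X_N^p\subset B_2$ used above — is what forces the delicate multiscale bookkeeping and ultimately produces the sharp exponent $1/p$ together with the linear dependence on $M$.
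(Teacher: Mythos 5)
Your first two steps are correct: the interpolation giving $\|g\|_\infty\le M^{p/2}\|g\|_2$ on $X_N$, and the dual Sudakov bound $\e_k(B_2,L_\infty(\Omega_n))\lesssim M^{p/2}\left(\log n/k\right)^{1/2}$, are both fine, as is the inclusion $X_N^p\subset B_2$. But the proof stops exactly where the lemma begins: the passage from the pair $\bigl(M^{p/2},\ \text{exponent }1/2\bigr)$ to the pair $\bigl(M,\ \text{exponent }1/p\bigr)$ \emph{is} the content of the statement, and your ``delicate multiscale bookkeeping'' is a description of a hope, not an argument. Worse, the chaining scheme you outline cannot be executed with the two tools you have. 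You propose nets of $X_N^p$ in the $L_p$ metric at dyadic scales; but $X_N^p$ is the unit ball of an $N$-dimensional space measured in its own norm, so its $L_p$-covering numbers at scale $2^{-t}$ grow like $2^{cNt}$ --- an entropy cost proportional to $N$ per scale --- while your budget is $2^{O(k)}$ with $k$ allowed to be as small as $1$ (the lemma covers all $k\in[1,N]$). For $k\ll N$ you cannot afford even one level of such a net. The alternative, chaining in the target seminorm $L_\infty(\Omega_n)$ itself, fails for the reason you yourself point out: after subtracting a net element the residual is small only in $L_\infty(\Omega_n)$, not in $L_p$ or $L_2$, and both of your tools (Nikolskii and dual Sudakov) require $L_p$- or $L_2$-smallness of the set being covered, so the iteration cannot be closed.

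What is actually needed is a mechanism of a different kind, and this is where both known proofs invest all their work. The paper (proving the stronger Theorem \ref{IT1}) first reduces, via the Dirichlet kernel and Nikol'skii duality (Lemma \ref{BL1}), to estimating the entropy numbers of the \emph{full} unit ball $B(L_p)$ in the seminorm $\|F\|_U=\max_{1\le j\le n}|\<F,g_j\>|$ with $g_j$ normalized in $L_{p'}$; this is Theorem \ref{IT2}. That theorem is obtained not by chaining but by (i) greedy ($m$-term) approximation of the octahedron $A_1(\D_n)$ in the $p'$-smooth space $L_{p'}$ at rate $m^{-1/p}$, (ii) Theorem \ref{AT1}, which converts an $m$-term approximation rate into entropy numbers with the factor $\log(2n/k)$, and (iii) the Bourgain--Pajor--Szarek--Tomczak-Jaegermann duality theorem for entropy numbers (Theorem \ref{AT4}). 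Kosov's original proof of Lemma \ref{IL1} instead invokes Talagrand's Lemma 16.5.4, again a deep result playing the same role. Some ingredient of this kind --- sparse approximation plus entropy duality, or Talagrand's machinery --- appears to be indispensable for the linear dependence on $M$ and the exponent $1/p$; your sketch contains none, and what it does establish is precisely the pair of easy bounds that these proofs are designed to beat.
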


We prove here (see Section \ref{B}) a slight improvement of the above lemma. We replace $\log n$ by $\log (2n/k)$. It is known that in the theory of Kolmogorov widths of finite dimensional unit $\ell_p$-balls the extra logarithmic factor has the form $\log (2n/k)$. 

\begin{Theorem}\label{IT1} Let $p\in [2,\infty)$ and $X_N\subset L_\infty(\Omega)$. Denote
$$
M_p(X_N):= \sup_{f\in X_N;f\neq 0} \|f\|_\infty/\|f\|_p.
$$
 Then for any set 
$\Omega_n =\{x^j\}_{j=1}^n\subset \Omega$ we have
 \be\label{B1}
 \e_k(X_N^p,L_\infty(\Omega_n)) \le C(p) M_p(X_N)\left(\frac{\log(2n/k)}{k}\right)^{1/p},\quad k=1,\dots,n.
 \ee
 \end{Theorem}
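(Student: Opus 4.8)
The plan is to pass to the finite-dimensional picture and reduce the statement to a sparse (greedy) approximation problem in $\ell_\infty^n$. Introduce the sampling operator $T\colon X_N\to\R^n$, $Tf=(f(x^1),\dots,f(x^n))$, and equip $\R^n$ with the $\ell_\infty^n$-norm; then $\e_k(X_N^p,L_\infty(\Omega_n))=\e_k(K,\ell_\infty^n)$, where $K:=T(X_N^p)$ is a centrally symmetric convex body lying in the $N$-dimensional subspace $T(X_N)$. The Nikol'skii-type inequality encoded in $M_p:=M_p(X_N)$ gives the pointwise bound $|f(x^j)|\le\|f\|_\infty\le M_p\|f\|_p$, so that $K$ is contained in the cube $M_p B$, with $B$ the unit ball of $\ell_\infty^n$. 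As a model for the general case, observe that when $\mu$ is uniform on $\Omega_n$ and $X_N=\R^n$, the body $K$ is a multiple of the $\ell_p^n$-ball and \eqref{B1} becomes essentially Schütt's estimate $\e_k(\ell_p^n,\ell_\infty^n)\asymp(\log(2n/k)/k)^{1/p}$; Theorem \ref{IT1} should be read as the extension of that estimate to an arbitrary $N$-dimensional subspace controlled by $M_p$.

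I would first recall how the model estimate is proved by thresholding, since this exposes the combinatorial heart of the bound. Given $u$ with $\|u\|_{\ell_p^n}\le1$ and a threshold $\delta>0$, the coordinates with $|u_j|\ge\delta$ number at most $\delta^{-p}$, while discarding the rest costs at most $\delta$ in $\ell_\infty^n$. Encoding such an approximant requires choosing a support $S$ of size $s\le\delta^{-p}$ among $n$ positions, costing $\log\binom{n}{s}\approx s\log(en/s)$ bits, plus a quantization of the retained coordinates. Choosing $\delta=(\log(2n/k)/k)^{1/p}$ makes $s\log(en/s)\approx k$, so a net of cardinality $2^{Ck}$ approximates the body to within $\approx\delta$. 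Here the exponent $1/p$ comes from the bound $\delta^{-p}$ on the number of large coordinates, and the factor $\log(2n/k)$ comes from counting their positions.

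The substance of the proof is to reproduce this mechanism for a general subspace, where the naive thresholding count is unavailable: one has no discrete bound $\sum_j|f(x^j)|^p\le\text{const}$ on $K$, and indeed a near-constant $f\in X_N^p$ can have $|f(x^j)|$ comparable to $M_p$ at all $n$ points, so the number of large coordinates cannot be bounded by $\delta^{-p}$. The remedy is to replace explicit thresholding by a greedy (Weak Chebyshev / pure greedy) algorithm for $Tf$ with respect to the point-evaluation dictionary $\{\pm\delta_{x^j}\}$, measuring the residual in $\ell_\infty(\Omega_n)$ and exploiting the low dimension $N$ together with $M_p$. This should produce, for each $f\in X_N^p$, a sparse approximant supported on a set of size $s\lesssim k/\log(2n/k)$ with $\ell_\infty(\Omega_n)$-error $\lesssim M_p\delta$. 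One then builds a net of $K$ from three ingredients whose log-cardinalities sum to $O(k)$: the choice of support $\bigl(\log\binom{n}{s}\approx s\log(2n/k)\bigr)$, a quantization of the coefficients on that support, and a coarse net for the discarded part, and finally optimizes $\delta$ as in the model case.

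The main obstacle is precisely this passage from the explicit counting argument, valid only for the coordinate body $\ell_p^n$, to the abstract setting: establishing the greedy $m$-term approximation rate in $\ell_\infty(\Omega_n)$ from the sole hypothesis $\|f\|_\infty\le M_p\|f\|_p$, with constants depending on $p$ but not on $N$ or $n$. This is where $p\ge2$ is essential, since it is the regime in which the $L_p$-geometry has the favorable convexity/smoothness behavior that drives the greedy estimate, and it is here that the results from greedy approximation replace the volumetric arguments underlying Talagrand's original proof. The remaining bookkeeping—balancing the three net contributions and summing the scale-by-scale errors without degrading $\log(2n/k)$ back to $\log n$—is routine once the greedy rate is in hand.
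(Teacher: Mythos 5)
Your outline funnels the whole proof into a single unproven claim: that every $f\in X_N^p$ admits a coordinate-sparse approximant of $Tf$, supported on $s\lesssim k/\log(2n/k)$ of the $n$ coordinates, with $\ell_\infty(\Omega_n)$-error $\lesssim M_p\delta$, $\delta=(\log(2n/k)/k)^{1/p}$. You present this as what a greedy algorithm ``should produce'' and call the rest routine, but the claim is not just missing a proof --- it is false, by exactly the obstruction you yourself identify two sentences earlier. Any approximant built from the dictionary $\{\pm\delta_{x^j}\}$ and supported on a set $S$ has $\ell_\infty(\Omega_n)$-error at least $\max_{j\notin S}|f(x^j)|$, no matter how its coefficients are chosen, and a greedy selection rule still outputs an approximant of this form, so it cannot evade that lower bound. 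Concretely: take $\Omega=[0,1]$, $\mu$ Lebesgue, $N=1$, $X_N=\operatorname{span}\{u\}$ with $u$ continuous, $u\equiv M$ on $[0,M^{-p}]$, small outside, normalized so that $\|u\|_p=1$; then $M_p(X_N)\approx M$. Place all $n$ points $x^j$ inside $[0,M^{-p}]$. The function $f=u\in X_N^p$ satisfies $|f(x^j)|=M$ at every point, so every sparse approximant with $s<n$ incurs error $M$, which is much larger than $M_p\delta$ once $k\gg\log(2n/k)$. (Theorem \ref{IT1} itself survives in this example because $T(X_N^p)$ is a segment, whose entropy numbers decay geometrically --- a mechanism invisible to sparse approximation.) There is also no greedy theorem available to invoke: the $m$-term rates from greedy approximation that this paper uses, $\sigma_m(A_1(\D_n),\D_n)_X\le C m^{1/q-1}$, require the ambient norm to be $q$-smooth with $q>1$, and $\ell_\infty^n$ is not uniformly smooth.

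The missing idea is to dualize rather than to sparsify in $\ell_\infty$. The paper represents each evaluation functional $f\mapsto f(x^j)$ on $X_N$ by an element $w_j=D_N(x^j,\cdot)-v_j$ of $L_{p'}(\Omega,\mu)$ with $\|w_j\|_{p'}\le 2M_p$ (Lemma \ref{BL1}, Nikol'skii duality via the Dirichlet kernel --- this is where $M_p$ enters). The greedy machinery is then run not on $Tf$ but on the octahedron $A_1(\D_n)$ spanned by the normalized representers $g_j=w_j/\|w_j\|_{p'}$ inside $L_{p'}$, which for $p\ge 2$ is $p'$-smooth with $p'\le 2$; this yields the entropy bound of Theorem \ref{AT2} (through the $m$-term-to-entropy conversion of Theorem \ref{AT1}, which is the source of the $\log(2n/k)$ factor). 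Finally, the Bourgain--Pajor--Szarek--Tomczak-Jaegermann duality theorem for entropy numbers (Theorem \ref{AT4}) converts this into Theorem \ref{IT2}, i.e.\ an entropy bound for the dual ball $B(L_p)\supset X_N^p$ in the norm $\|f\|_U=\max_j|\langle f,g_j\rangle|$, and the inequality $\|f\|_{L_\infty(\Omega_n)}\le 2M_p\|f\|_U$ for $f\in X_N$ closes the argument. So your instincts --- that greedy approximation drives the proof and that $p\ge2$ is where smoothness enters --- are correct, but both act on the dual side (in $L_{p'}$, with exponent $p'\le2$), and the passage back to $\ell_\infty(\Omega_n)$ is by entropy duality, not by selecting coordinates. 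Without these two duality steps your outline cannot be completed.
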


Lemma \ref{IL1} was proved in \cite{Kos} with a help of  Talagrand's fundamental result in functional analysis (see \cite{Tal}, p.552, Lemma 16.5.4). Theorem \ref{IT1} is based on a slight improvement of Talagrand's result. We now proceed to the main result of the paper. For a Banach space $X$ we define the modulus of smoothness
$$
\rho(u):= \rho(X,u) := \sup_{\|x\|=\|y\|=1}\left(\frac{1}{2}(\|x+uy\|+\|x-uy\|)-1\right).
$$
The uniformly smooth Banach space is the one with the property
$$
\lim_{u\to 0}\rho(u)/u =0.
$$
In this paper we only consider uniformly smooth Banach spaces with power type modulus of smoothness $\rho(u) \le \ga u^q$, $1< q\le 2$. 

Let $\D_n=\{g_j\}_{j=1}^n$ be a system of  elements of cardinality $|\D_n|=n$ in a Banach space $X$. We equip the linear space 
 $W_n:=[\D_n]:= \sp\{\D_n\}$ with the norm
 \be\label{I3}
 \|f\|_A := \|f\|_{A_1(\D_n)} := \inf\left\{ \sum_{j=1}^n |c_j|\, : \, f = \sum_{j=1}^n c_jg_j\right\}.
 \ee
 Denote by $W_{n,A}$ the $W_n$ equipped with the norm $\|\cdot\|_A$.
 We are interested in the dual norm to the norm $\|\cdot\|_A$, which we denote $\|\cdot\|_U$:
 $$
 \|F\|_U:= \|F\|_{U(\D_n)}:=   \sup_{f\in W_n; \|f\|_A\le 1} |F(f)|.
 $$
 Denote $W_{n,U}^*$ the $W_n^*$ equipped with the norm $\|\cdot\|_U$. Note that $\|\cdot\|_U$ is a semi-norm on the dual to $X$, space $X^*$. 

 \begin{Theorem}\label{IT2} Let $X$ be $q$-smooth: $\rho(X,u) \le \gamma u^q$, $1<q\le 2$ and let $\D_n$ be a  normalized system in $X$ of cardinality $|\D_n|=n$.  Then for the unit ball $B(X^*)$ of $X^*$ we have  
  \be\label{I5}
 \e_k(B(X^*),\|\cdot\|_{U(\D_n)}) \le C(X) \left(\frac{\log(2n/k)}{k}\right)^{1-1/q},\quad k=1,\dots,n.
 \ee
 \end{Theorem}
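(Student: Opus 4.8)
The plan is to reduce the statement to a finite-dimensional $\ell_\infty$ entropy estimate and then run a greedy ($m$-term) approximation argument whose combinatorial cost produces the factor $\log(2n/k)$.

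First I would compute the seminorm explicitly. Since $\{f\in W_n:\|f\|_A\le 1\}$ is exactly the symmetric convex hull $\operatorname{conv}(\pm\D_n)$, and a linear functional attains the maximum of its modulus over a polytope at a vertex, for every $F\in X^*$
\be
\|F\|_U=\sup_{\|f\|_A\le 1}|F(f)|=\max_{1\le j\le n}|F(g_j)|.
\ee
Thus the map $T:X^*\to\ell_\infty^n$, $TF:=(F(g_1),\dots,F(g_n))$, is a metric surjection from $(X^*,\|\cdot\|_U)$ onto its image, so $\e_k(B(X^*),\|\cdot\|_{U(\D_n)})=\e_k\bigl(T(B(X^*)),\ell_\infty^n\bigr)$. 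Because $\|g_j\|=1$, the image $V:=T(B(X^*))$ sits in $[-1,1]^n$, and its support function is $h_V(c)=\sup_{\|F\|\le1}\sum_j c_jF(g_j)=\|\sum_j c_jg_j\|_X$; hence $V$ is polar to the $\|\cdot\|_A$-unit ball measured in $X$. In operator language, writing $S:\ell_1^n\to X$ for the synthesis map $Sc=\sum_jc_jg_j$ (so $\|S\|\le1$), we have $V=S^*(B(X^*))$, and the target quantity is $\e_k(S^*)$, dual to the entropy number of $S$, i.e. to $\e_k(\operatorname{conv}(\pm\D_n),X)=\e_k(S)$.

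The analytic heart is the primal bound $\e_k(\operatorname{conv}(\pm\D_n),X)\le C(q,\gamma)(\log(2n/k)/k)^{1-1/q}$, which is where $q$-smoothness enters through greedy approximation. I would invoke the convergence rate of the Weak Chebyshev Greedy Algorithm in a $q$-smooth space: for every $f\in\operatorname{conv}(\pm\D_n)$ there is an $m$-term combination $G_m=\sum_{j\in J}b_jg_j$, $|J|\le m$, with $\|f-G_m\|_X\le C(q,\gamma)m^{-(1-1/q)}$ and $\sum_{j\in J}|b_j|\le C$. This step genuinely uses $\rho(X,u)\le\gamma u^q$ per element; an averaged (type-$q$) substitute would fail, as the degenerate dictionary $g_1=\dots=g_n$ already shows that a single fixed sign pattern need not obey the type inequality. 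I would then assemble an $\e$-net from these approximants by a union bound over supports: there are $\binom{n}{m}$ choices of $J$; on each the coefficient vector lives in a bounded $m$-dimensional set, which I quantize on a grid of step $\delta\sim m^{-(1-1/q)}$ using $(C/\delta)^m$ points. The net has cardinality at most $\binom{n}{m}(C/\delta)^m$, so its log-cardinality is $\lesssim m\log(2n/m)$, while every $f$ is approximated to within $\sim m^{-(1-1/q)}$. Choosing $m\asymp k/\log(2n/k)$ so that $m\log(2n/m)\asymp k$ yields $\e_k\lesssim(\log(2n/k)/k)^{1-1/q}$; the improvement over Talagrand's $\log n$ comes precisely from counting $m$-subsets via $\binom{n}{m}\approx(2n/m)^m$ rather than paying $\log n$ per selected coordinate.

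The main obstacle I expect is the passage from this primal estimate to the dual seminorm in the theorem. One clean route is to invoke the duality of entropy numbers (Artstein-Avidan--Milman--Szarek), giving $\e_k(S^*)\le C\,\e_{ck}(S)$ with absolute constants, after which the harmless shift $k\mapsto ck$ is absorbed into $\log(2n/k)$; I expect, however, that Temlyakov avoids this heavy tool by running the greedy selection directly on the dual data. The delicate point there is that the naive sparsify-and-quantize net on $V\subset\ell_\infty^n$ is not robust for degenerate dictionaries (the all-equal example again), so the construction must be carried out in $X$ and transported. Verifying the greedy $m$-term rate with explicit, $n$-independent constants and checking the quantization bookkeeping are the remaining, essentially routine, points.
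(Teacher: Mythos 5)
Your overall architecture --- identify $\|\cdot\|_U$ with $F\mapsto\max_{j}|\<F,g_j\>|$, prove a primal entropy bound for the octahedron $A_1(\D_n)$ in $X$ by greedy $m$-term approximation plus counting, then dualize --- is exactly the paper's (Proposition \ref{AP1}, Theorem \ref{AT2}, and the proof of Theorem \ref{IT2}, respectively). But both of your key steps have genuine gaps. Take the duality step first: the Artstein-Avidan--Milman--Szarek theorem gives $\e_k(S^*)\le C\,\e_{ck}(S)$ with \emph{absolute} constants only when one of the spaces involved is a Hilbert space; here $S:\ell_1^n\to X$ with $X$ a general $q$-smooth space, so that result does not apply, and entropy duality with absolute constants for arbitrary operators is precisely the open duality conjecture, not a citable fact. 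Your fallback guess that Temlyakov ``avoids this heavy tool'' is also wrong: the paper's proof rests on the Bourgain--Pajor--Szarek--Tomczak-Jaegermann theorem (Theorem \ref{AT4}), valid for a compact operator $J:V\to W$ with $V$ \emph{uniformly convex}, which compares $\sum_{k=0}^m\e_k(J)^p$ with $\sum_{k=0}^m\e_k(J^*)^p$ up to constants depending on $V$. The paper applies it with $V=X^*$ (uniformly convex because $X$ is uniformly smooth), $J=Id_n^*:X^*\to W_{n,U}^*$, so that $J^*=Id_n:W_{n,A}\to X$ and $\e_k(J^*)=\e_k(A_1(\D_n),X)$; taking $p=q'/2$, the pointwise bound is extracted from the sum bound via $(m/2)\e_m(J)^p\le\sum_{k=m/2}^{m}\e_k(J)^p$ together with $\sum_{k=1}^{m}(\log(2n/k)/k)^{1/2}\lesssim m^{1/2}(\log(2n/m))^{1/2}$ ((\ref{A12})--(\ref{A13})). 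This space dependence is exactly why the theorem's constant is $C(X)$, improvable to $C(\ga,q)$ only under the extra hypothesis on $X^*$ (Remark \ref{IR1}). Finally, pulling a covering of $J(B(X^*))\subset W_{n,U}^*$ back to a covering of $B(X^*)$ in the seminorm $\|\cdot\|_U$ needs an argument --- a covering center in $\ell_\infty^n$ need not lie in the image of $X^*$ --- which the paper supplies via a Hahn--Banach extension ((\ref{A15})--(\ref{A16})); your metric-surjection remark glosses over this.

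Second, the counting step you call ``routine bookkeeping'' fails precisely where the theorem improves on Talagrand. Your net has log-cardinality $\log\binom{n}{m}+m\log(C/\dt)\asymp m\log(2n/m)+m\log m$ (since $\dt\asymp m^{-(1-1/q)}$), not $\lesssim m\log(2n/m)$: the extra $m\log m$ is harmless only when $\log m\lesssim\log(2n/m)$, i.e.\ $m\le n^{1-c}$, which with your choice $m\asymp k/\log(2n/k)$ confines you to $k\lesssim n^{1-c'}$ --- a range in which $\log(2n/k)\asymp\log n$, so you recover only Talagrand's bound. At the opposite end, $k\asymp n$ forces $m\asymp n$, and the quantization cost $m\log m\asymp n\log n$ overshoots the budget $k$ by a factor $\log n$; this is exactly the regime where $\log(2n/k)\ll\log n$ is supposed to win. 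The standard repair is to use approximants whose coefficients lie on a fixed grid by construction, so no quantization is needed: for instance Maurey's empirical method (which needs only type $q$, implied by $q$-smoothness) produces approximants of the form $\frac{1}{m}\sum_{i=1}^{m}\pm g_{j_i}$ with repetitions allowed, and such sums number at most $\binom{n+m-1}{m}2^m$, of log-cardinality $\lesssim m\log(2n/m)$. (Your side remark that an averaged type-$q$ argument ``would fail'' for $g_1=\dots=g_n$ is mistaken: Maurey's argument applies the type inequality to a centered i.i.d.\ sum and then selects a good realization, and it is insensitive to repetitions in the dictionary.) The paper sidesteps all of this by citing Theorem \ref{AT1} from \cite{VT138}, whose proof does not rely on your one-shot sparsify-and-quantize scheme.
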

 
 \begin{Remark}\label{IR1} By Remark \ref{AR1} in the case both spaces $X$ and its dual $X^*$ satisfy conditions $\rho(X,u) \le \gamma u^q$, $\rho(X^*,u) \le \gamma u^q$ with some $\gamma >0$ and $q\in (1,2]$ we can replace in Theorem \ref{IT2} the constant $C(X)$ by $C(\gamma,q)$.
 \end{Remark}

We note that  Talagrand's above mentioned  result corresponds to Theorem \ref{IT2} with 
$\log (2n/k)$ replaced by $\log n$. We point out that the proof of Theorem \ref{IT2} (see Section \ref{A}) uses different ideas than the ones from \cite{Tal}. Our proof is based on results from greedy approximation. It is important to state that the bounds in Theorem \ref{IT2} do not allow further improvements. Indeed, let us apply Theorem \ref{IT2} in the following classical case.
Let $X=\ell^n_q$, $1<q\le 2$. Then it is known (see, for instance, \cite{DGDS}) that $\rho(\ell^n_q,u) \le u^q/q$. In the case $q\in [2,\infty)$ it is known (see, for instance, \cite{DGDS}) that $\rho(\ell^n_q,u) \le (q-1)u^q/2$. Let $\D_n$ be the canonical basis of $\ell^n_q$. Then the $\|\cdot\|_A$ is the $\ell^n_1$ norm and the $\|\cdot\|_U$ is the $\ell^n_\infty$ norm. Clearly, $X^*= \ell^n_p$, $p=q/(q-1)$. Then Theorem \ref{IT2} and Remark \ref{IR1} give for the unit ball $B^n_p$ of $\ell^n_p$, $2\le p<\infty$,
 \be\label{I6}
 \e_k(B^n_p,\ell^n_\infty) \le C(p) \left(\frac{\log(2n/k)}{k}\right)^{1/p},\quad k=1,\dots,n.
 \ee
It is known (see, for instance, \cite{DTU}, p.96, and the discussion there) that bound 
(\ref{I6}) cannot be improved for $k\in [\log n,n]$. In case $k\le \log n$ there is a trivial bound 
$\e_k(B^n_p,\ell^n_\infty)\le 1$, which is better than (\ref{I6}).

\section{Bounds for entropy numbers of octahedra} 
\label{A}

Let $\D_n=\{g_j\}_{j=1}^n$ be a system of  elements of cardinality $|\D_n|=n$ in a Banach space $X$. Consider best $m$-term approximations of $f$ with respect to $\D_n$
$$
\sigma_m(f,\D_n)_X:= \inf_{\{c_j\};\Lambda:|\Lambda|=m}\|f-\sum_{j\in \Lambda}c_jg_j\|.
$$
For a function class $W$ set
$$
\sigma_m(W,\D)_X:=\sup_{f\in W}\sigma_m(f,\D)_X.
$$
The following Theorem \ref{AT1} was proved in \cite{VT138} (see also \cite{VTbookMA}, p.331, Theorem 7.4.3).
\begin{Theorem}\label{AT1} Let a compact $W\subset X$ be such that there exists a  system $\D_n$, $|\D_n|=n$, and  a number $r>0$ such that 
$$
  \sigma_m(W,\D_n)_X \le m^{-r},\quad m\le n.
$$
Then for $k\le n$
\begin{equation}\label{A3}
\e_k(W,X) \le C(r) \left(\frac{\log(2n/k)}{k}\right)^r.
\end{equation}
\end{Theorem}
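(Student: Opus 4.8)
The plan is to promote the $m$-term approximation rate to an entropy bound through a \emph{dyadic telescoping} of near-best approximants, covering the increment at each scale separately and then summing the costs. A single-scale covering alone only reproduces the weaker $\log n$ bound; the improvement to $\log(2n/k)$ forces a scale-dependent allocation of quantization accuracy, which I expect to be the heart of the argument.

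\emph{Setup.} Since $W$ is compact it is bounded. For $f\in W$ and each $s=0,1,\dots,s^\ast$ (with $2^{s^\ast}\le n$ a cutoff fixed below) choose a near-best $2^s$-term approximant $G_s=G_s(f)$, so that $\|f-G_s\|\le \sigma_{2^s}(W,\D_n)_X\le 2^{-rs}$. Writing
\[
f=G_0+\sum_{s=1}^{s^\ast}(G_s-G_{s-1})+(f-G_{s^\ast}),
\]
each increment $D_s:=G_s-G_{s-1}$ is supported on a set $\Lambda_s$ of at most $2^{s}+2^{s-1}\le 2^{s+1}$ indices, hence lies in the subspace $X_{\Lambda_s}:=\sp\{g_j:j\in\Lambda_s\}$ of dimension $\le 2^{s+1}$, and $\|D_s\|\le \|f-G_s\|+\|f-G_{s-1}\|\le 2^{1+r}\,2^{-rs}$. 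The tail obeys $\|f-G_{s^\ast}\|\le 2^{-rs^\ast}$. Crucially, the norm of each increment is controlled by the hypothesis, independently of the radius of $W$.

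\emph{Per-scale cover and allocation.} At scale $s$ I would build a net by first selecting the support $\Lambda_s$ among the $\binom{n}{2^{s+1}}$ possibilities and then choosing a $\delta_s$-net of the ball of radius $2^{1+r}2^{-rs}$ in the $(\le 2^{s+1})$-dimensional space $X_{\Lambda_s}$; a standard volumetric estimate gives such a net of cardinality at most $(3\cdot 2^{1+r}2^{-rs}/\delta_s)^{2^{s+1}}$. Thus the number of bits spent at scale $s$ is
\[
k_s\ \lesssim\ 2^{s+1}\log\frac{en}{2^{s+1}}\ +\ 2^{s+1}\log\frac{2^{-rs}}{\delta_s}.
\]
The decisive choice is the graded accuracy $\delta_s:=2^{-rs^\ast}\,2^{-r(s^\ast-s)}$, i.e.\ finer quantization at the coarse scales (few coordinates, large entries) and accuracy exactly $2^{-rs^\ast}$ at the finest scale. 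With this schedule the total covering error telescopes,
\[
\sum_{s=0}^{s^\ast}\delta_s+\|f-G_{s^\ast}\|\ \le\ C(r)\,2^{-rs^\ast},
\]
because $\sum_{u\ge 0}2^{-ru}$ converges, while the quantization bits stay \emph{linear}: $\sum_s 2^{s+1}\log(2^{-rs}/\delta_s)\lesssim \sum_s 2^{s+1}\,r(s^\ast-s)\lesssim r\,2^{s^\ast}$. The support-selection bits dominate and, being geometric in $s$, are controlled by the finest level, $\sum_{s}2^{s+1}\log(en/2^{s+1})\le C\,2^{s^\ast}\log(2n/2^{s^\ast})$.

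\emph{Conclusion and the main obstacle.} Taking the product of the per-scale nets yields a net of $W$ of total entropy $\le C(r)\,2^{s^\ast}\log(2n/2^{s^\ast})$ and radius $\le C(r)2^{-rs^\ast}$. Setting $m:=2^{s^\ast}$ to be the largest dyadic integer with $C(r)\,m\log(2n/m)\le k$ gives $m\asymp k/\log(2n/k)$, hence a net of at most $2^k$ elements with
\[
\e_k(W,X)\ \le\ C(r)\,m^{-r}\ \le\ C(r)\left(\frac{\log(2n/k)}{k}\right)^{r}.
\]
The improvement over the $\log n$ bound enters in exactly two places: using $\log\binom{n}{m}\le m\log(en/m)$ (rather than $m\log n$) for the support count, and the graded precision $\delta_s$ that keeps the coefficient-encoding cost linear in $m$. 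I expect the graded allocation to be the main obstacle: a uniform full-precision quantization would contribute an extra $m\log m$ to the entropy and degrade the exponent back to the $\log n$ form, so the whole gain rests on verifying that the chosen schedule simultaneously telescopes the error down to the single-scale accuracy $m^{-r}$ and keeps the quantization entropy dominated by the support term $m\log(2n/m)$ throughout the range $k=1,\dots,n$, including the endpoint $k\asymp n$ where $m\asymp n$.
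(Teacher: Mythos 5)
The paper does not prove Theorem \ref{AT1} itself; it quotes it from \cite{VT138} (see also \cite{VTbookMA}, Theorem 7.4.3). Your scheme --- telescoping near-best $2^s$-term approximants, covering each increment set by support selection plus a volumetric net, with geometrically graded accuracies --- is essentially the strategy of that cited proof, and your accounting is right where it matters: the increments $D_s$, $s\ge 1$, are $2^{s+1}$-sparse with $\|D_s\|\le 2^{1+r}2^{-rs}$; the choice $\delta_s=2^{-rs^\ast}2^{-r(s^\ast-s)}$ keeps the total error $\lesssim 2^{-rs^\ast}$ while the quantization bits stay linear in $2^{s^\ast}$; and the support bits are $\lesssim 2^{s^\ast}\log(2n/2^{s^\ast})$ via $\log\binom{n}{m}\le m\log(en/m)$.

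The genuine gap sits under your sentence ``Crucially, the norm of each increment is controlled by the hypothesis, independently of the radius of $W$.'' That is true for $D_s$, $s\ge 1$, but the base term $G_0$ is not an increment: the hypothesis only gives $\|G_0\|\le\|f\|+1\le R_W+1$, where $R_W:=\sup_{f\in W}\|f\|_X$. Hence your scale-$0$ net is a union of $n$ one-dimensional balls of radius $R_W+1$, it costs $\log n+\log((R_W+1)/\delta_0)$ bits, and your final constant is $C(r,R_W)$, not $C(r)$. This cannot be repaired from the stated hypothesis alone: take $W=\{cg_1\,:\,|c|\,\|g_1\|\le M\}$; then $\sigma_m(W,\D_n)_X=0$ for all $m\ge 1$, while $\e_1(W,X)\ge M/2$, so the conclusion fails for large $M$. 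In other words, the theorem carries an implicit normalization $\sup_{f\in W}\|f\|_X\le 1$ (it holds in every application, e.g. $W=A_1(\D_n)$ with $\D_n$ normalized in Theorem \ref{AT2}), and a complete proof must invoke it twice: once to bound the radius of the scale-$0$ ball, and once to dispose of small $k$. Indeed your construction spends at least $\log n$ bits on scale-$0$ support selection, so it says nothing for $k\lesssim\log(2n)$, where $m\asymp k/\log(2n/k)<1$; there one falls back on the trivial estimate $\e_k(W,X)\le\sup_{f\in W}\|f\|_X\le 1\le C(r)\left(\log(2n/k)/k\right)^r$, which again uses the normalization. With that assumption made explicit, your argument is complete (modulo the harmless replacement of exact best approximants by near-best ones, since $\sigma_m$ is an infimum).
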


For a given set $\D_n=\{g_j\}_{j=1}^n$ of elements we introduce the octahedron (generalized octahedron)
\be\label{A4}
A_1(\D_n) := \left\{f\,:\, f=\sum_{j=1}^n c_jg_j,\quad \sum_{j=1}^n |c_j|\le 1\right\}.
\ee
Note that in the case $X = \ell^n_1$ and $g_j =e_j$, where $\{e_j\}_{j=1}^n$ is a canonical basis of $\ell^n_1$, the octahedron $A_1(\D_n)$ coincides with the regular octahedron in $\R^n$. 

The following Corollary \ref{AC1} of Theorem \ref{AT1} was obtained in \cite{VT138} (see also \cite{VTbookMA}, p.332, Corollary 7.4.7).

\begin{Corollary}\label{AC1} Let $X=L_p$, $1<p<\infty$. For a normalized system $\D_n$ of cardinality $|\D_n|=n$ we have
\begin{equation}\label{A5}
\e_k(A_1(\D_n),L_p)\le C(p)\left(\frac{\log(2n/k)}{k}\right)^{1-\max(\frac{1}{2},\frac{1}{p})},\quad k\le n.
\end{equation}
\end{Corollary}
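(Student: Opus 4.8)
The plan is to reduce everything to Theorem \ref{AT1} applied to the compact set $W = A_1(\D_n)$ with the exponent $r := 1 - \max(\frac12,\frac1p)$. Once we verify the best $m$-term approximation hypothesis
$$
\sigma_m(A_1(\D_n),\D_n)_{L_p} \le C(p)\, m^{-r}, \qquad m \le n,
$$
the bound (\ref{A5}) follows at once. The constant $C(p)$ is harmless: since both $\sigma_m(\cdot,\D_n)_X$ and $\e_k(\cdot,X)$ are positively homogeneous of degree one, I would apply Theorem \ref{AT1} to the rescaled set $C(p)^{-1}A_1(\D_n)$, for which the hypothesis holds with constant $1$, and then multiply the resulting estimate back by $C(p)$.

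To verify the hypothesis I would first record the smoothness of $L_p$: for $2 \le p < \infty$ the space is $2$-smooth, $\rho(L_p,u)\le C u^2$, and for $1<p\le 2$ it is $p$-smooth, $\rho(L_p,u)\le C u^p$. In either case $L_p$ is $q$-smooth with $q = \min(p,2)$, and crucially $1-\frac1q = 1-\max(\frac12,\frac1p) = r$. This is where the exponent in (\ref{A5}) is dictated by the geometry of $L_p$.

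The substantive step is then the greedy-approximation estimate for the octahedron in a $q$-smooth space: if $\rho(X,u)\le\gamma u^q$ and $\D_n$ is normalized, then every $f$ with $\|f\|_{A_1(\D_n)}\le 1$ is approximated by the $m$-term output $G_m(f)$ of a weak Chebyshev greedy algorithm run with respect to $\D_n$ at the rate $\|f-G_m(f)\|_X \le C(\gamma,q)\,m^{-(1-1/q)}$. Because $G_m(f)$ is a linear combination of at most $m$ elements of $\D_n$, this yields $\sigma_m(f,\D_n)_X \le C(\gamma,q)\,m^{-(1-1/q)}$, and taking the supremum over $f\in A_1(\D_n)$ gives exactly the hypothesis with $r = 1-1/q$.

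The main obstacle is precisely this greedy bound. Since $\D_n$ need not be orthogonal, a basis, or even linearly independent, the clean Stechkin-type argument available in a Hilbert space is unavailable; instead one controls the residual $f_m$ of the greedy algorithm through the modulus of smoothness, deriving at each step a lower bound for the reduction of a suitable power of $\|f_m\|$ in terms of the best correlation of $f_m$ with $\D_n$, which is kept large by the constraint $\|f\|_{A_1(\D_n)}\le 1$. Solving the resulting nonlinear recursion produces the decay $m^{-(1-1/q)}$, and it is the power $q$ of the smoothness modulus that fixes this exponent and hence the $\max(\frac12,\frac1p)$ appearing in the statement.
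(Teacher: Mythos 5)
Your proposal is correct and follows essentially the same route as the paper: the paper derives Corollary \ref{AC1} exactly as in its proof of Theorem \ref{AT2}, i.e.\ by combining the known greedy-approximation bound $\sigma_m(A_1(\D_n),\D_n)_X \le C(q,\gamma)m^{1/q-1}$ for $q$-smooth $X$ (cited from \cite{DGDS} and \cite{VTbook}) with Theorem \ref{AT1}, where for $X=L_p$ one takes $q=\min(p,2)$ so that $1-1/q=1-\max(\frac12,\frac1p)$. Your explicit rescaling to absorb the constant $C(p)$ and your sketch of how the greedy recursion yields the $m^{-(1-1/q)}$ rate are sound refinements of steps the paper simply cites.
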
 

We need the following version of Corollary \ref{AC1}, which we prove here for completeness. 

 \begin{Theorem}\label{AT2} Let $X$ be $q$-smooth: $\rho(X,u) \le \gamma u^q$, $1<q\le 2$. Then for any  normalized system $\D_n$ of cardinality $|\D_n|=n$ we have
 \be\label{A6}
 \e_k(A_1(\cD_n),X) \le C(q,\ga) \left(\frac{\log(2n/k)}{k}\right)^{1-1/q},\quad k=1,\dots,n.
 \ee
 \end{Theorem}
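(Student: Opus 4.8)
The plan is to deduce Theorem~\ref{AT2} from Theorem~\ref{AT1} applied to the compact set $W=A_1(\cD_n)$ with exponent $r=1-1/q$. The whole argument thus reduces to establishing the best $m$-term approximation bound
$$
\sigma_m(A_1(\cD_n),\cD_n)_X \le C(q,\ga)\,m^{-(1-1/q)},\qquad m\le n.
$$
Once this is in hand I would feed it into Theorem~\ref{AT1}. Since the latter is stated with the clean normalization $\sigma_m(W,\cD_n)_X\le m^{-r}$, I would first rescale: writing $C:=C(q,\ga)$ and using the homogeneity $\sigma_m(\alpha f,\cD_n)_X=|\alpha|\,\sigma_m(f,\cD_n)_X$, the set $C^{-1}A_1(\cD_n)$ satisfies the hypothesis of Theorem~\ref{AT1} with $r=1-1/q$ exactly. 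The resulting estimate for $\e_k\!\left(C^{-1}A_1(\cD_n),X\right)$ then scales back to \eqref{A6}, the factor $C$ being absorbed into the final constant $C(q,\ga)$.

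So the core task is the convex-approximation estimate, and this is where the $q$-smoothness is used. Observe first that $A_1(\cD_n)=\operatorname{conv}\{\pm g_1,\dots,\pm g_n\}$: every $f=\sum_j c_jg_j$ with $\sum_j|c_j|\le 1$ is a convex combination of the normalized elements $\pm g_j$, using that $0=\tfrac12 g_1+\tfrac12(-g_1)$ also belongs to this hull. I would approximate such an $f$ by a relaxed greedy algorithm: set $G_0=0$ and iterate $G_m=(1-\la_m)G_{m-1}+\la_m\ff_m$ with $\ff_m\in\{\pm g_j\}$ and $\la_m\in[0,1]$, so that each $G_m$ is a convex combination of at most $m$ of the $g_j$, hence an admissible $m$-term approximant. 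Writing the residual $r_m:=f-G_m=(1-\la_m)r_{m-1}+\la_m(f-\ff_m)$, I would pick $\ff_m$ to correlate favorably with a norming functional $F_{r_{m-1}}$ of $r_{m-1}$ (that is, $\|F_{r_{m-1}}\|=1$ and $F_{r_{m-1}}(r_{m-1})=\|r_{m-1}\|$): because $f$ is a convex combination of the $\pm g_j$, one always has $\langle F_{r_{m-1}},f\rangle\le\max_{\ff\in\{\pm g_j\}}\langle F_{r_{m-1}},\ff\rangle$, so some $\ff_m$ satisfies $\langle F_{r_{m-1}},f-\ff_m\rangle\le 0$.

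The analytic engine is the standard consequence of power-type smoothness $\rho(X,u)\le\ga u^q$: for $x\ne 0$ and any $y$,
$$
\|x+y\| \le \|x\| + \langle F_x,y\rangle + 2\ga\,\frac{\|y\|^q}{\|x\|^{q-1}}.
$$
Applying this with $x=(1-\la_m)r_{m-1}$ and $y=\la_m(f-\ff_m)$, noting that $F_x=F_{r_{m-1}}$ (norming functionals are invariant under positive scaling), that $\langle F_{r_{m-1}},f-\ff_m\rangle\le 0$, and that $\|f-\ff_m\|\le\|f\|+\|\ff_m\|\le 2$, I obtain a recursion of the form
$$
\|r_m\| \le (1-\la_m)\|r_{m-1}\| + C\ga\,\frac{\la_m^{\,q}}{(1-\la_m)^{q-1}\,\|r_{m-1}\|^{q-1}}.
$$

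The main obstacle will be extracting the sharp rate $\|r_m\|\le C(q,\ga)\,m^{-(1-1/q)}$ from this recursion. I would do this by induction, choosing the step sizes $\la_m\asymp 1/m$ so as to balance the two terms on the right; this is the classical Donahue--Gurvits--Darken--Sontag-type argument for convex approximation in $q$-smooth spaces, and it is the only place where a genuine (if routine) computation is required. Taking the supremum over $f\in A_1(\cD_n)$ then yields the desired bound on $\sigma_m(A_1(\cD_n),\cD_n)_X$, and the reduction described above completes the proof of \eqref{A6}.
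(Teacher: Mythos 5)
Your proposal takes essentially the same route as the paper: both reduce Theorem \ref{AT2} to the $m$-term approximation bound $\sigma_m(A_1(\D_n),\D_n)_X \le C(q,\ga)\,m^{1/q-1}$ and then invoke Theorem \ref{AT1} with $W=A_1(\D_n)$ and $r=1-1/q$ (your rescaling step to absorb the constant into the normalization of Theorem \ref{AT1} is the correct way to make that invocation precise). The only difference is that the paper simply cites this approximation bound from \cite{DGDS} and \cite{VTbook} (p.~342, Theorem 6.8), whereas you sketch its standard proof via a relaxed greedy algorithm --- which is precisely the Donahue--Gurvits--Darken--Sontag argument contained in those references.
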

 \begin{proof} Consider a new Banach space $W_{n,X}$, which is defined as $W_n:=[\D_n]:= \sp\{\D_n\}$ equipped with the norm $\|\cdot\|_X$. Then $\D_n$ is a dictionary for $W_{n,X}$. Clearly,
 $$
 \rho(W_{n,X},u) \le \rho(X,u) \le \ga u^q.
 $$
 Then it is known (see \cite{DGDS} and \cite{VTbook}, p.342, Theorem 6.8) that
 \be\label{A7}
 \sigma_m(A_1(\D_n),\D_n)_X \le C(q,\ga)m^{1/q-1},\quad m=1,2,\dots.
 \ee
 We now apply Theorem \ref{AT1} with $W=A_1(\D_n)$ and $r=1-1/q$ and complete the proof 
 of Theorem \ref{AT2}.
 
 \end{proof}
 
 We proceed to the dual version of Theorem \ref{AT2}. As above we equip the space 
 $W_n:=[\D_n]:= \sp\{\D_n\}$ with the norm
 \be\label{A8}
 \|f\|_A := \|f\|_{A_1(\D_n)} := \inf\left\{ \sum_{j=1}^n |c_j|\, : \, f = \sum_{j=1}^n c_jg_j\right\}.
 \ee
 We are interested in a dual norm to the norm $\|\cdot\|_A$, which we denote $\|\cdot\|_U$.
 For a Banach space $X$ denote $X^*$ its dual (conjugate) and for $F\in X^*$ and $f\in X$ we  write for convenience
 $$
 \<F,f\>:= \<f,F\> := F(f).
 $$
For two Banach spaces $X$, $Y$ and a bounded linear operator $A:\, X\to Y$ the dual (adjoint, conjugate) operator 
$A^*:\, Y^*\to X^*$ is the one with a property: for all $x\in X$ and all $y^* \in Y^*$ we have
$$
\<Ax,y^*\> = \<x,A^*y^*\>.
$$
We need the following simple claim. Note that the norm $\|F\|_{\D_n}$ (see (\ref{A9})) with $\D_n$ replaced by a dictionary $\D$ in $X$ is widely used in greedy approximation (see \cite{VTbook}, Ch.6).
\begin{Proposition}\label{AP1} Let $W_n$ and $\|\cdot\|_A$ be as above. Then for $F\in W_n^*$
\be\label{A9}
\|F\|_U:= \|F\|_A^* := \sup_{f\in W_n; \|f\|_A\le 1} |\<F,f\>| = \max_{1\le j\le n} |\<F,g_j\>|=:\|F\|_{\D_n}.
\ee
\end{Proposition}
\begin{proof} First, for any $f=\sum_{j=1}^n c_j g_j$  we have 
$$
|\<F,f\>| =\left|\sum_{j=1}^n c_j \<F,g_j\>\right| \le \left(\sum_{j=1}^n |c_j|\right)\left(\max_{1\le j\le n} |\<F,g_j\>|\right),
$$
which proves the inequality $\le$ in (\ref{A9}). 

Second, obviously, $\|g_j\|_A \le 1$, $j=1,\dots,n$. This proves the inequality $\ge$ in (\ref{A9}) and completes the proof of Proposition \ref{AT1}.

\end{proof}

{\bf Proof of Theorem \ref{IT2}.} We now prove the
  main result of this paper -- Theorem \ref{IT2}.  
  Let $B_V$ denote the unit ball of a Banach space $V$. For a linear operator $J:\, V\to W$ denote 
by $J(B_V):= \{w\in W:\, w=Jv,\, \|v\|_V \le 1\}$ the image of the unit ball $B_V$. Define the entropy numbers of the compact operator $J$ as follows
$$
\e_k(J) := \e_k(J, V\to W) := \e_k(J(B_V),W).
$$
We need a duality result for the entropy numbers proved in \cite{BPST}.

\begin{Theorem}\label{AT4} Let $V$ be a uniformly convex Banach space. Let $J:\, V\to W$ be a compact operator. Then for every $m\in \N$ and $p\in [1,\infty)$
$$
C_0^{-p} \sum_{k=0}^m \e_k(J^*)^p \le \sum_{k=0}^m \e_k(J)^p \le C_0^p \sum_{k=0}^m \e_k(J^*)^p,
$$
where $C_0$ depends only on $V$.
\end{Theorem}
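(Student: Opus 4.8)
The statement is the (weak, i.e.\ averaged) duality of entropy numbers, and I would follow the route of \cite{BPST}. Pointwise duality of entropy numbers fails in general, but uniform convexity of $V$ — equivalently, a good modulus of convexity $\delta_V$, which forces $V$ to be reflexive, makes $V^*$ uniformly smooth, and renders $V$ (hence $V^*$) $K$-convex — is exactly enough to dualize the \emph{covering process} up to a bounded shift of the index. The plan is therefore to prove an index-shifted pointwise comparison $\e_{2k}(J^*)\le C(V)\,\e_k(J)$ and then to absorb the shift by summation. First I would reduce to finite dimensions: reflexivity gives $\e_k(J^{**})=\e_k(J)$, and since entropy numbers are subadditive and monotone in the operator norm and $J$ is compact, I may replace $J$ by a finite-rank operator with arbitrarily small error. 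It then suffices to prove the bounds with constants independent of the rank, working with the finite-dimensional symmetric bodies $J(B_V)\subset W$ and $J^*(B_{W^*})\subset V^*$, for which all covering numbers are finite.

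The engine is a single-scale Gaussian duality of covering numbers, upgraded to all scales. After fixing an auxiliary Euclidean structure and factoring $J$ through it, I would use the two halves of Sudakov's inequality: the direct form $\sup_k k^{1/2}\e_k(J)\le c\,\ell(J)$, bounding the entropy of $J(B_V)$ by a Gaussian $\ell$-norm, and the dual form of Pajor and Tomczak-Jaegermann $\sup_k k^{1/2}\e_k(J^*)\le c\,\ell^*(J)$, bounding the entropy of the polar body $J^*(B_{W^*})$. The $K$-convexity of $V$ — which follows from $\delta_V$ and is the only structural hypothesis the argument truly uses — gives a comparison $\ell^*(J)\le K(V)\,\ell(J)$, hence the weak duality $\sup_k k^{1/2}\e_k(J^*)\le c\,K(V)\,\ell(J)$. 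To turn this single rate into a bound at every scale I would iterate dyadically: cover $J(B_V)$ by $2^k$ balls of radius $\e_k(J)$ and re-run the Gaussian estimate on each residual piece, whose $\ell$-norm is now governed by $\e_k(J)$ rather than by $\ell(J)$; summing over the dyadic levels converts the weak estimate into the index-shifted pointwise bound $\e_{2k}(J^*)\le C(V)\,\e_k(J)$.

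Then I would assemble the statement. Since $k\mapsto\e_k(J)$ is nonincreasing, substituting $\e_k(J^*)\le C(V)\,\e_{\lfloor k/2\rfloor}(J)$ into $\sum_{k=0}^m\e_k(J^*)^p$ and regrouping yields $\sum_{k=0}^m\e_k(J^*)^p\le C_0^p\sum_{k=0}^m\e_k(J)^p$ for every $p\in[1,\infty)$, with $C_0=C_0(V)$. The reverse inequality needs no new idea: the argument uses only that one member of the pair $(V,W)$ is $K$-convex, and this is a self-dual property, so the entire scheme applies verbatim to the operator $J^*:W^*\to V^*$, whose target $V^*$ is again $K$-convex. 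This gives $\sum_k\e_k(J^{**})^p\le C_0^p\sum_k\e_k(J^*)^p$, and reflexivity ($\e_k(J^{**})=\e_k(J)$) turns it into $\sum_k\e_k(J)^p\le C_0^p\sum_k\e_k(J^*)^p$, completing the two-sided estimate.

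The main obstacle is the iteration in the second paragraph: pointwise entropy duality is false without convexity (this is the content of the long-standing duality conjecture), so the whole weight of the proof rests on showing that uniform convexity lets one dualize the covering numbers up to a bounded index shift, with a constant depending only on $\delta_V$ and thus independent of the rank and of $p$. Controlling the Euclidean distortion introduced in the factorization and the accumulation of constants through the dyadic iteration, so that the final $C_0$ depends on $V$ alone, is the delicate part; by contrast, the passage from the shifted pointwise bound to the summed inequality, and the symmetrization for the lower bound, are routine.
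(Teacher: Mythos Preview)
The paper does not give its own proof of Theorem~\ref{AT4}; it is quoted from \cite{BPST} and used as a black box in the proof of Theorem~\ref{IT2}. Your proposal is a faithful high-level outline of the argument in \cite{BPST}: reduction to finite rank via compactness and reflexivity, Sudakov and dual Sudakov (Pajor--Tomczak-Jaegermann) inequalities linked through $K$-convexity of $V$ (which indeed follows from uniform convexity via Pisier's theorem and is self-dual), a dyadic iteration that upgrades the single-scale Gaussian comparison to an index-shifted pointwise bound of the form $\e_{ck}(J^*)\le C(V)\e_k(J)$, and then summation together with reflexivity for the reverse inequality. So your route and the paper's coincide by construction---both point to \cite{BPST}. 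One small caveat: the index shift actually obtained in \cite{BPST} is $\e_{ck}$ for some absolute $c>1$ depending on the $K$-convexity constant, not literally $c=2$; this is harmless for the summed inequality but worth stating correctly.
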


Our assumption that $X$ is uniformly smooth implies that $X^*$ is uniformly convex (see \cite{LT}, p.61). We will apply Theorem \ref{AT4} with $V=X^*$ and $W= W_{n,U}^*$. We now define the operator $J$. Let $Id_n$ be the identity operator from $W_{n,A}$  to $X$. The dual operator $Id_n^*$ will map $X^*$ to $W_{n,U}^*$. We set $J=Id_n^*$. Then $J^* = Id_n$ and 
\be\label{A11}
\e_k(J^*) = \e_k(Id_n, W_{n,A} \to X) = \e_k(A_1(\D_n),X).
\ee
It is sufficient to prove Theorem \ref{IT2} in the case when $k$ is an even number. Take 
$m\le n$ to be an even number and set $p= q'/2$, $q':= \frac{q}{q-1}$, where $q$ is from Theorem \ref{IT2}. By Theorem \ref{AT4} we obtain
\be\label{A12}
(m/2)\e_m(J)^p \le \sum_{k=m/2}^m \e_k(J)^p\le \sum_{k=0}^m \e_k(J)^p\le  C_0^p \sum_{k=0}^m \e_k(J^*)^p.
\ee
Using Theorem \ref{AT2} and (\ref{A11}),  we continue 
\be\label{A13}
\le C_0^p\left(1+ \sum_{k=1}^m C(q,\ga)^p \left(\frac{\log(2n/k)}{k}\right)^{1/2} \right)\le 
C(X)^p m^{1/2} (\log(2n/m))^{1/2}.
\ee
Thus, we obtain from (\ref{A12}) and (\ref{A13})
\be\label{A14}
\e_m(J) \le C(X)\left(\frac{\log(2n/m)}{m}\right)^{1/q'}.
\ee
From the definition of the dual operator we have for any $f\in W_n$ and any $v\in X^*$
$$
\<f,v\>= \<J^*f,v\> = \<f,Jv\> 
$$
and, therefore, 
\be\label{A15}
\forall v\in X^*\quad  \text{we have}\quad  \|v\|_U = \|Jv\|_U.
\ee 
We now derive from (\ref{A15}) that 
\be\label{A16}
\e_m(B(X^*),\|\cdot\|_U) \le \e_m(J(B(X^*),W_{n,U}^*).
\ee
Indeed, suppose that $v\in X^*$ and $w \in W_{n,U}^*$ is such that $\|Jv-w\|_U \le \e$.
Define $u\in X^*$ as follows: for $f\in W_n$ set $\<f,u\>=\<f,w\>$ and by the Hanh-Banach theorem extend it to the whole $X$. Then, for any $f\in W_n$ we have
$$
\<f,u\> = \<J^*f,u\> = \<f,Ju\>
$$
and, therefore, $Ju=w$. By (\ref{A15}) we obtain 
$$
\|v-u\|_U = \|Jv-Ju\|_U = \|Jv-w\|_U \le \e. 
$$
This, in turn, implies  (\ref{A16}). Finally
\be\label{A17}
 \e_m(J(B_{X^*}),W_{n,U}^*)=\e_m(J).
 \ee
 A combination of (\ref{A14}), (\ref{A16}), and (\ref{A17}) implies (\ref{I5}), which completes the proof of Theorem \ref{IT2}.
 
 \begin{Remark}\label{AR1} By Remark (1) from \cite{BPST} and by the duality property between the uniform convexity of $X$ and the uniform smoothness of $X^*$ (see \cite{LT}, p.61) we obtain the following property of the constant $C_0$ in Theorem \ref{AT4}. If both $V$ and $V^*$ satisfy $\rho(V,u) \le \gamma u^q$ and $\rho(V^*,u) \le \gamma u^q$ with some 
 $\gamma>0$ and $q\in (1,2]$, then the constant $C_0$ in Theorem \ref{AT4} depends only on 
 $\gamma$ and $q$. This and the proof of Theorem \ref{IT2} imply that in the case both spaces $X$ and its dual $X^*$ satisfy the conditions $\rho(X,u) \le \gamma u^q$, $\rho(X^*,u) \le \gamma u^q$ with some $\gamma >0$ and $q\in (1,2]$ we can replace in Theorem \ref{IT2} the constant $C(X)$ by $C(\gamma,q)$.
 \end{Remark}

\section{Proof of Theorem \ref{IT1}}
\label{B}
 
We prove here Theorem \ref{IT1}.
 Let $X_N\subset L_\infty(\Omega)$ be an $N$-dimensional subspace and let a set $\Omega_n =\{x^j\}_{j=1}^n$ be a set of points from $\Omega$.  
 
  Let $\{u_i\}_{i=1}^N$ be an orthonormal basis of $X_N$. Denote the corresponding Dirichlet kernel 
$$
D_N(x,y) := \sum_{i=1}^N u_i(x)u_i(y),\quad x,y\in\Omega.
$$
We need a known technical lemma (see, for instance, \cite{VTbookMA}, p.91, Lemma 3.3.4).
We use the notation
$$
v \perp X_N\quad \text{to mean that}\quad \<v,f\> =0 \quad \forall f\in X_N.
$$

\begin{Lemma}\label{BL1} Let $p':=p/(p-1)$ be a dual exponent to $p \in [1,\infty)$. Then
$$
M_p(X_N) = \sup_{x\in \Omega} \inf_{v\in L_{p'}:\, v\perp X_N} \|D_N(x,\cdot)- v(\cdot)\|_{p'}.
$$
\end{Lemma}
\begin{proof} For each $x\in \Omega$ by the Nikol'skii duality theorem (see, for instance, \cite{VTbookMA}, p.509) we obtain
$$
\sup_{f\in X_N^{p}} |f(x)| = \sup_{f\in X_N^{p}}\left|\int_\Omega D_N(x,y) f(y) d\mu\right| =
\inf_{v\in L_{p'}:\, v\perp X_N} \|D_N(x,\cdot)- v(\cdot)\|_{p'}.
$$
It remains to observe that 
$$
M_p(X_N) = \sup_{x\in \Omega}\sup_{f\in X_N^{p}} |f(x)|,
$$
which completes the proof of Lemma \ref{BL1}.
\end{proof}

We continue the proof of Theorem \ref{IT1}. For each $j \in [1,n]$, using Lemma \ref{BL1}, we find a $v_j$ such that $v_j\perp X_N$ and
\be\label{B2}
\|D_N(x^j,\cdot)- v_j(\cdot)\|_{p'} \le 2M_p(X_N).
\ee
Denote
$$
w_j(y) := D_N(x^j,y)- v_j(y),\quad g_j(y) := w_j(y)/\|w_j\|_{p'}.
$$
We now apply results of Section \ref{A}. We set $X=L_{p'}$. Then $X^*=L_p$. Set 
$$
\D_n =\{g_j\}_{j=1}^n,\quad W_n = [\D_n].
$$
It is well known (see, for instance, \cite{DGDS}) that for $q\in [1,2]$ we have
$$
\rho(L_q,u) \le u^q/q.
$$
Applying Theorem \ref{IT2} and Remark \ref{IR1} we obtain
\be\label{B3}
 \e_k(X_N^p,\|\cdot\|_U) \le C(p) \left(\frac{\log(2n/k)}{k}\right)^{1/p},\quad k=1,\dots,n.
 \ee
 Next, for $f\in X_N$ we have
 $$
\<f,g_j\|w_j\|_{p'}\>= \<f,w_j\> = \<f(y),D_N(x^j,y)\> = f(x^j).
$$
Therefore, taking into account (\ref{B2}) we obtain for $f\in X_n$
\be\label{B4}
\|f\|_{L_\infty(\Omega_n)} \le 2M_p(X_N) \|f\|_U.
\ee
 Combining (\ref{B3}) and (\ref{B4}) we complete the proof of Theorem \ref{IT1}.

 {\bf Acknowledgement.} The work was supported by the Russian Federation Government Grant N{\textsuperscript{\underline{o}}}14.W03.31.0031. The paper contains results obtained in frames of the program \lq\lq Center for the storage and analysis of big data", supported by the Ministry of Science and High Education of Russian Federation (contract 11.12.2018 N{\textsuperscript{\underline{o}}}13/1251/2018 between the Lomonosov Moscow State University and the Fund of support of the National technological initiative projects).

\end{document}